\newtheorem{thm}{Theorem}
\newtheorem{lemma}[thm]{Lemma}
\begin{document}

\nocite{*}

\title{\bf On the Sum of the Square of a Prime and a Square-Free Number}

\author{\textsc{Adrian W. Dudek} \\ 
Mathematical Sciences Institute \\
The Australian National University \\ 
\texttt{adrian.dudek@anu.edu.au}\\
\\
\textsc{David J. Platt} \\ 
 Heilbronn Institute for Mathematical Research\\
University of Bristol, Bristol, UK \\ 
\texttt{dave.platt@bris.ac.uk}}
\date{}

\maketitle

\begin{abstract}
\noindent We prove that every integer $n \geq 10$ such that $n \not\equiv 1 \text{ mod } 4$ can be written as the sum of the square of a prime and a square-free number.  This makes explicit a theorem of Erd\H{o}s that every sufficiently large integer of this type may be written in such a way. Our proof requires us to construct new explicit results for primes in arithmetic progressions. As such, we use the second author's numerical computation regarding GRH to extend the explicit bounds of Ramar\'{e}--Rumely.
\end{abstract}

\section{Introduction}

We say that a positive integer is square-free if it is not divisible by the square of any prime number. It was proven by Erd\H{o}s \cite{erdos} in 1935 that every sufficiently large integer $n \not\equiv 1 \text{ mod } 4$ may be written as the sum of the square of a prime and a square-free number. The congruence condition here is sensible. If $n\equiv 1 \mod 4$ then $4|(n-p^2)$ for any odd prime $p$. This only leaves the case $p=2$, but $n-4$ fails to be square-free infinitely often\footnote{For example, one can consider the congruence class $13 \text{ mod } 36$.}.

It is the objective of this paper to make explicit the proof provided by Erd\H{o}s, to the end of proving the following theorem. 

\begin{thm} \label{main}
Let $n \geq 10$ be an integer such that $n \not\equiv 1 \mod 4$. Then there exists a prime $p$ and a square-free number $k$ such that $n=p^2+k$.
\end{thm}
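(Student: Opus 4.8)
\section*{Strategy of the proof}

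\noindent In the spirit of Erd\H{o}s's argument, the plan is to show that
\[
R(n) \;:=\; \#\bigl\{\, p \text{ prime} : p^2 < n,\ n - p^2 \text{ square-free}\,\bigr\}
\]
is positive. For $n$ below an explicit bound $n_0$ this is verified by direct computation, so assume $n$ is large. Detecting square-freeness by M\"obius inversion and interchanging summations gives
\[
R(n) \;=\; \sum_{p^2 < n}\ \sum_{d^2 \mid n - p^2} \mu(d) \;=\; \sum_{d \le \sqrt{n}} \mu(d)\, A_d(n),
\]
where $A_d(n) := \#\{\, p \text{ prime} : p^2 < n,\ d^2 \mid n - p^2 \,\}$. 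For squarefree $d$ coprime to $2n$ one has $A_d(n) = \sum_{r} \pi\bigl(\sqrt n;\, d^2, r\bigr)$, the sum running over the $\rho(d) \le 2^{\omega(d)}$ square roots $r$ of $n$ modulo $d^2$ (here $\pi(x;k,r)$ counts primes $p\le x$ with $p\equiv r\pmod k$). Replacing each $\pi(\sqrt n; d^2, r)$ by $\pi(\sqrt n)/\phi(d^2)$ for the $d \le z$, with $z = z(n)$ a slowly growing parameter, produces a main term $\pi(\sqrt n)\sum_{d \le z}\mu(d)\rho(d)/\phi(d^2)$; since $\sum_{d}\rho(d)/\phi(d^2)$ converges with a negligible tail, this equals $\pi(\sqrt n)\bigl(C(n) + o(1)\bigr)$ with $C(n) = \prod_{q \nmid 2n}\bigl(1 - \rho(q)/\phi(q^2)\bigr)$. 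The congruence hypothesis enters precisely here: for $n \equiv 2, 3 \pmod 4$ no even $d$ contributes, and for $n \equiv 0 \pmod 4$ an even $d$ forces $p = 2$, so that (together with the $d$ sharing an odd factor with $n$, for which $A_d(n) \le 1$) these contribute only $O(n^\epsilon)$ in total and do not affect $C(n)$. Since $\rho(q) \in \{0,2\}$ for $q \nmid 2n$, we get $C(n) \ge \prod_{q \ge 3}\bigl(1 - \tfrac{2}{q^2 - q}\bigr)$, a positive absolute constant, so the main term is $\gg \pi(\sqrt n) \to \infty$.

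\noindent It then remains to bound the error terms by $o(\pi(\sqrt n))$. The singular-series tail $\pi(\sqrt n)\sum_{d > z}\mu(d)\rho(d)/\phi(d^2)$, and the contribution of the $d$ not coprime to $2n$, are routine. The decisive term is $\sum_{d \le z}\rho(d)\,\bigl|\pi(\sqrt n; d^2, r) - \pi(\sqrt n)/\phi(d^2)\bigr|$: bounding it requires an \emph{explicit} estimate for primes in arithmetic progressions to the moduli $d^2$, uniform over the relevant range, and it is for this that we must strengthen the explicit bounds of Ramar\'e--Rumely, using the second author's numerical verification of GRH, to cover the moduli in play. I expect this to be the principal obstacle. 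Finally, $\sum_{z < d \le \sqrt n}|\mu(d)|\,A_d(n)$ is controlled by bounding $A_d(n)$ directly: for $z < d \le n^{1/4 - \epsilon}$ the Brun--Titchmarsh inequality gives $A_d(n) \ll_\epsilon \sqrt n/(\phi(d^2)\log n)$, whose sum over $d > z$ is $\ll_\epsilon \pi(\sqrt n)\sum_{d > z} 2^{\omega(d)}/\phi(d^2) = o(\pi(\sqrt n))$; for $n^{1/4 - \epsilon} < d \le n^{3/8}$ a residue class modulo $d^2$ meets $\{1, \dots, \lfloor\sqrt n\rfloor\}$ in at most $1 + \sqrt n/d^2$ points, whence $\sum A_d(n) \ll n^{3/8 + \epsilon} = o(\pi(\sqrt n))$; and for $n^{3/8} < d \le \sqrt{n}$ one writes $n - p^2 = e d^2$ with $e < n^{1/4}$ and bounds the number of solutions of $p^2 + e d^2 = n$ by the divisor bound $\ll_\epsilon n^\epsilon$ for representations of $n$ by the binary form $x^2 + e y^2$, uniformly in $e$, again obtaining $o(\pi(\sqrt n))$. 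Rendering every one of these estimates explicit pins down the threshold $n_0$ beyond which $R(n) > 0$, and the finitely many $n \le n_0$ are checked computationally.
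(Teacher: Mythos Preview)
Your proposal is correct in outline and hits all the same structural beats as the paper --- explicit prime-in-AP bounds for small moduli, Brun--Titchmarsh for a middle range, a trivial count for a further range, binary-quadratic-form bounds for the largest moduli, and a final computation for small $n$. The organization, however, is genuinely different. You run a full M\"obius inclusion--exclusion over \emph{all} squarefree $d$, aiming at an asymptotic $R(n)\sim C(n)\pi(\sqrt n)$; the paper instead just uses the union bound
\[
\#\{p<\sqrt n:\ n-p^2\ \text{not squarefree}\}\ \le\ \sum_{q\ \text{prime}}\ \#\{p<\sqrt n:\ q^2\mid n-p^2\}
\]
(with $\log$-weights) and shows the right side is $<\theta(\sqrt n)$. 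This buys two concrete simplifications. First, only \emph{prime}-squared moduli $q^2$ appear, so the Ramar\'e--Rumely/Platt tables for $\theta(x;q^2,\cdot)$ apply directly; your scheme needs explicit bounds for $\pi(\sqrt n;d^2,r)$ at \emph{composite} $d\le z$ as well, which the available data do not supply (or forces $z$ so small that the singular-series tail becomes numerically awkward). Second, there are at most two residues $r\bmod q^2$, rather than $\rho(d)\le 2^{\omega(d)}$ residues, which keeps every constant tight. The paper also works with $\theta$ rather than $\pi$, matching the form of the explicit AP bounds. What your approach buys is an honest asymptotic for $R(n)$, not merely positivity --- more information, at the cost of more bookkeeping. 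For the stated theorem the paper's union-bound route is the cleaner path to a small computable threshold $n_0$.
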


In a recent paper \cite{dudeksum}, the first author proved that every integer greater than two can be written as the sum of a prime and a square-free number. One can think of such a result as a weak-but-explicit form of Goldbach's conjecture. Theorem \ref{main} is significantly stronger than this, for the sequence of squares of primes is far more sparse than the sequence of the primes. To prove Theorem \ref{main}, we combine modern explicit results on primes in arithmetic progressions and computation. 

The proof may be outlined as follows. For any integer $n$ satisfying the conditions of the above theorem, we want to show that there exists a prime $p < \sqrt{n}$ such that $n-p^2$ is square-free. That is, we require some prime $p$ such that
$$n-p^2 \not\equiv 0 \mod q^2$$
for all odd primes $q < \sqrt{n}$. The idea is to consider, for some large $n$ and each odd prime $q< \sqrt{n}$, those \textit{mischievous} primes $p$ that satisfy the congruence
$$n \equiv p^2 \mod q^2.$$
Then, for each $q$ we explicitly bound from above (with logarithmic weights) the number of primes $p$ which satisfy the above congruence. Summing over all moduli $q$ gives us an upper bound for the weighted count of the so-called mischievous primes
$$\sum_{q < \sqrt{n}} \sum_{\substack{p < \sqrt{n} \\ n \equiv p^2 \text{ mod } q^2}} \log p.$$
It is then straightforward to show that for large enough $n$, the above sum is less than the weighted count of \emph{all} primes less than $\sqrt{n}$, and therefore there must exist a prime $p < \sqrt{n}$ such that $n-p^2$ is not divisible by the square of any prime. 

This method works well, and allows us to prove Theorem \ref{main} for all integers $n \geq 2.5 \cdot 10^{14}$ that satisfy the congruence condition. We eliminate the remaining cases by direct computation to complete the proof.

\section{Theorem \ref{main} for large integers}

\subsection{Case 1}

We start by considering integers in the range $n \geq 2.5\cdot 10^{14}$ such that $n \not \equiv 1 \text{ mod } 4$. As usual, we define
$$\theta(x;k,l) = \sum_{\substack{p \leq x \\ p \equiv l \text{ mod } k}} \log p,$$
where $p$ denotes a prime number. 

The paper of Ramar\'{e}--Rumely \cite{ramarerumely} provides us with bounds of the form
\begin{equation*}
\bigg| \theta(x;k,l) - \frac{x}{\varphi(k)} \bigg| < \epsilon(k,x_0) \frac{x}{\varphi(k)}
\end{equation*}
and
\begin{equation*}
\bigg| \theta(x;k,l) - \frac{x}{\varphi(k)} \bigg| < \omega(k,x_1) \sqrt{x}.
\end{equation*}
for various ranges of $x\geq x_0$ and $x\leq x_1$ respectively. These computations were in turn based on Rumely's numerical verification of the Generalised Riemann Hypothesis (GRH) \cite{Rumely1993} for various moduli and to certain heights. Since then, the second author has verified GRH for a wider range of moduli and to greater heights \cite{Platt2013}. For our purposes, we rely only on the following:
\begin{lemma}\label{grh}
Let $q$ be a prime satisfying $17 \leq q \leq 97$. All non-trivial zeros $\rho$ of Dirichlet L-functions derived from characters of modulus $q^2$ with $\Im\rho \leq 1000$ have $\Re\rho =1/2$.
\end{lemma}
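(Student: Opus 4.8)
The plan is to establish this lemma by rigorous numerical computation, following the method developed by the second author in \cite{Platt2013}. For each prime $q$ with $17 \leq q \leq 97$ there are $\varphi(q^2) = q(q-1)$ Dirichlet characters modulo $q^2$, and it is enough to treat the \emph{primitive} ones: an imprimitive character modulo $q^2$ is principal or is induced by a primitive character modulo $q$, and the corresponding L-function differs from the inducing one only by finitely many Euler factors at $q$, so its non-trivial zeros are those of an L-function of modulus $q$ or of $\zeta$, all of which lie within the range already verified. Moreover, since the non-trivial zeros of $L(s,\bar{\chi})$ are the complex conjugates of those of $L(s,\chi)$, one may pair $\chi$ with $\bar{\chi}$ and verify the hypothesis for $\Im\rho \in [0,1000]$ only.

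Fix a primitive character $\chi$ modulo $q^2$. From the functional equation one constructs a \emph{rotated} L-function $Z(t,\chi)$ (the analogue of the Hardy $Z$-function), built from $L(\tfrac12+it,\chi)$ together with the appropriate Gamma-factor and root number, which is real-valued for real $t$ and whose real zeros are exactly the zeros of $L(s,\chi)$ on the critical line. Using interval arithmetic together with a rigorously truncated approximate functional equation (or Euler--Maclaurin summation with certified error terms), one evaluates $Z(t,\chi)$ on a sufficiently fine grid covering $t \in [0,1000]$; each sign change between consecutive grid points certifies a zero of $L(s,\chi)$ on the critical line, and we let $N_{0}$ denote the number of sign changes so located.

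To finish one must show that $N_{0}$ equals the \emph{total} number of non-trivial zeros $\rho$ of $L(s,\chi)$, counted with multiplicity, with $0 \leq \Im\rho \leq 1000$. This is exactly what Turing's method supplies: the argument principle gives an exact formula
\[
N(T,\chi) = \frac{T}{\pi}\log\frac{qT}{2\pi} - \frac{T}{\pi} + O(1) + \frac{1}{\pi}S(T,\chi),
\]
where the oscillating term $S(T,\chi)$ admits an explicit bound on its mean over a short interval $[T,T+h]$; taking $T=1000$ and evaluating $Z$ at a few further points above height $1000$ pins $N(1000,\chi)$ down to a single integer. Comparing that integer with $N_{0}$ then verifies that every zero up to height $1000$ is simple and satisfies $\Re\rho = 1/2$. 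Running this over the finitely many pairs $(q,\chi)$ completes the proof.

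The main obstacle is computational rather than conceptual: one must propagate all truncation and rounding errors through interval arithmetic so that every asserted sign change and the Turing bound are provably correct, and one must refine the grid locally near any close pair of zeros where $Z$ nearly acquires an extra sign change. In practice the moduli here, namely $q^2 \leq 97^2 = 9409$, and the height $1000$ lie comfortably within the range already certified in \cite{Platt2013}, so the lemma follows at once from that computation; the outline above simply records how that verification is carried out.
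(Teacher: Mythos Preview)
Your proposal is correct and takes essentially the same approach as the paper: both defer to the rigorous GRH verification of \cite{Platt2013}, with the paper simply citing Theorem~10.1 there while you additionally sketch the underlying method (rotated $Z$-function, interval arithmetic, Turing's method for the zero count). Nothing is missing or wrong in your outline; it is just more expansive than the one-line citation the paper gives.
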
 
\begin{proof}
See Theorem 10.1 of \cite{Platt2013}.
\end{proof}

We can therefore extend the results of Ramar\'{e}--Rumely with the following lemma:

\begin{lemma}
For $x>10^{10}$ we have 
\begin{equation*}
\bigg| \theta(x;q^2,l) - \frac{x}{\varphi(q^2)} \bigg| < \epsilon(q^2,10^{10}) \frac{x}{\varphi(q^2)}
\end{equation*}
for the values of $q$ and $\epsilon(q^2,10^{10})$ in Table \ref{tab:new_eq}.
\end{lemma}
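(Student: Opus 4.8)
The plan is to rerun the analytic argument of Ramar\'e--Rumely \cite{ramarerumely} (itself a refinement of Rumely \cite{Rumely1993}) with three changes of input: the modulus is taken to be $q^2$, the starting point is $x_0=10^{10}$, and the height to which GRH is known is $H=1000$ as supplied by Lemma \ref{grh}. Concretely, one begins from a suitably smoothed explicit formula: for a smoothing parameter $\delta>0$ one expresses the $\delta$-averaged version of $\psi(x;q^2,l)$ as its expected main term $x/\varphi(q^2)$, minus a sum over the non-trivial zeros $\rho$ of the Dirichlet $L$-functions of modulus $q^2$ weighted by a kernel that decays rapidly in $|\Im\rho|$, plus lower-order terms coming from the trivial zeros, the principal character, and the prime powers that separate $\psi$ from $\theta$.

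The sum over zeros is then split at height $H=1000$. For zeros with $|\Im\rho|\le 1000$, Lemma \ref{grh} guarantees $\Re\rho=1/2$, so each such term is $O(\sqrt{x})$ and their total contribution is at most $\sqrt{x}$ times an explicit constant depending on $\delta$ and on the number of zeros up to height $1000$, which one controls via an explicit Riemann--von Mangoldt bound of the form $N(T,\chi)\le C\,T\log(q^2T)+\cdots$. For zeros with $|\Im\rho|>1000$ one no longer has GRH and instead invokes a classical explicit zero-free region for $L$-functions of modulus $q^2$ together with the same density bound; since the kernel decays like $\exp(-c\,\delta|\Im\rho|)$, this tail is small provided $\delta$ is not too tiny. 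Collecting the two pieces, undoing the smoothing, and re-inserting the prime-power correction yields a bound of exactly the claimed shape with $\epsilon(q^2,10^{10})$ an explicit function of $\delta$; one then optimises $\delta$ numerically for each prime $q$ with $17\le q\le 97$ and records the resulting values in Table \ref{tab:new_eq}.

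The main obstacle is that the height $H=1000$ to which GRH has been verified is fairly modest, whereas the moduli $q^2$ here run up to $97^2=9409$: the low-zero term grows with both $H$ and $q$, while the high-zero term, though it shrinks as $H$ increases, is inflated by the larger conductor and by the weaker zero-free region available for these moduli. One must therefore check that these competing contributions can be simultaneously balanced by a choice of $\delta$ keeping $\epsilon(q^2,10^{10})$ genuinely small — in particular well below $1$ — for every $q$ in the range, and that the cutoff $x>10^{10}$ is large enough to dominate the $O(\sqrt{x})$ terms against the main term $x/\varphi(q^2)$. That balancing, together with the bookkeeping required to render every constant in the explicit formula and the zero-free region fully numerical, is where essentially all the work lies; the remainder is a direct transcription of the Ramar\'e--Rumely computation with the new inputs.
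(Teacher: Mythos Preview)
Your outline is essentially the paper's approach: one plugs the GRH height $H_\chi=1000$ from Lemma~\ref{grh} into the Ramar\'e--Rumely machinery (their Theorem~5.1.1). Two specifics you miss: for $q\in\{3,5,7,11,13\}$ no new computation is done at all --- those entries are lifted verbatim from Table~1 of \cite{ramarerumely}; and for $17\le q\le 97$ the paper does not optimise $\delta$ freely per modulus but uses the fixed choice $\delta=2e/H_\chi$, together with an integer parameter $m\in\{10,11,12\}$ depending on $q$, $C_1(\chi,H_\chi)=9.14$, and the explicit bounds on $\widetilde{A}_\chi$ and $\widetilde{E}_\chi$ from Lemmas~4.2.1 and~4.1.2 of \cite{ramarerumely}, after checking the side condition $2\cdot 9.645908801\cdot\log^2(1000/9.14)\ge \log 10^{10}$.

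One assertion in your write-up is factually wrong and should be removed: you claim the balancing must keep $\epsilon(q^2,10^{10})$ ``well below $1$'' for every $q$ in the range. A glance at Table~\ref{tab:new_eq} shows this fails from $q=47$ onward, with $\epsilon(97^2,10^{10})\approx 5.55$. The inequality in the lemma is still perfectly true and still useful as an \emph{upper} bound on $\theta(x;q^2,l)$ in the application (equation~(\ref{cont1})), but the balancing you describe does not and need not achieve $\epsilon<1$ for the larger moduli.
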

\begin{proof}
We refer to \cite{ramarerumely}. The values for $q\in\{3,5,7,11,13\}$ are from Table 1 of that paper. For the other entries, we use Theorem 5.1.1 with $H_\chi=1000$ and $C_1(\chi,H_\chi)=9.14$ (see display 4.2). We set $m=10$ for $q\leq 23$, $m=12$ for $q\geq 47$ and $m=11$ otherwise. We use $\delta=2e/H_\chi$ and for $\widetilde{A}_\chi$ we use the upper bound of Lemma 4.2.1. Finally, for $\widetilde{E}_\chi$ we rely on Lemma 4.1.2 and we note that $2\cdot 9.645908801\cdot\log^2(1000/9.14)\geq \log 10^{10}$ as required.
\end{proof}

\begin{table}[h!] 
\caption{Values for $\epsilon(q^2,10^{10})$.} 
\label{tab:new_eq} 
\centering 
\begin{tabular}{| c c | c c | c c | c c |} 
\hline 
$q$ & $\epsilon(q^2,10^{10})$ & $q$ & $\epsilon(q^2,10^{10})$ & $q$ & $\epsilon(q^2,10^{10})$ & $q$ & $\epsilon(q^2,10^{10})$  \\[0.5 ex] \hline
$3$ & $0.003228$ & $19$ & $0.17641$ & $43$ & $0.95757$ & $71$ & $2.82639$\\
$5$ & $0.012214$ & $23$ & $0.25779$ & $47$ & $1.15923$ & $73$ & $3.00162$\\
$7$ & $0.017015$ & $29$ & $0.41474$ & $53$ & $1.50179$ & $79$ & $3.56158$\\
$11$ & $0.031939$ & $31$ & $0.47695$ & $59$ & $1.89334$ & $83$ & $3.96363$\\
$13$ & $0.042497$ & $37$ & $0.69397$ & $61$ & $2.03488$ & $89$ & $4.61023$\\
$17$ & $0.14271$ & $41$ & $0.86446$ & $67$ & $2.49293$ & $97$ & $5.55434$\\
\hline 
\end{tabular} 
\end{table} 

\begin{lemma}
We have 
$$\omega\left(3^2,10^{10}\right)=1.109042,$$ 
$$\omega\left(5^2,10^{10}\right)=0.821891,$$ 
$$\omega\left(7^2,10^{10}\right)=0.744132,$$
$$\omega\left(11^2,10^{10}\right)=0.711433$$ 
and 
$$\omega\left(13^2,10^{10}\right)=0.718525.$$ 
If $q$ is a prime such that $17 \leq q \leq 97$ we have
\begin{equation*}
\omega(q^2,10^{10})=\frac{\log 7 - \frac{7}{\varphi(q^2)}}{\sqrt{7}}.
\end{equation*}
\end{lemma}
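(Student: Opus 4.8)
The plan is to unpack the definition of $\omega(q^2,10^{10})$ and treat the two families of moduli separately. Recall that a constant is admissible as $\omega(k,x_1)$ as soon as $|\theta(x;k,l) - x/\varphi(k)| \le \omega(k,x_1)\sqrt{x}$ holds for every $x$ with $2 \le x \le x_1$ and every $l$ coprime to $k$, so it suffices to control $\sup_{(l,k)=1,\,2\le x\le x_1} |\theta(x;k,l)-x/\varphi(k)|/\sqrt{x}$. For the small moduli $q\in\{3,5,7,11,13\}$ I would simply quote the corresponding entries of Ramar\'e--Rumely \cite{ramarerumely}, noting that the closed form cannot hold there (for $q=7$ the residue $7$ is not coprime to $q^2$, and for the smallest moduli the admissible classes are dense enough in the primes that the extremal $x$ is not $7$). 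The substance is therefore the case $17\le q\le 97$, where I write $\varphi=\varphi(q^2)=q(q-1)$.

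For the lower bound I would note that, since $7<q^2$ and $\gcd(7,q^2)=1$, the class $l\equiv 7 \pmod{q^2}$ is admissible and contains no prime below $q^2+7$ except $7$ itself; hence $\theta(x;q^2,7)=\log 7$ throughout $7\le x<q^2+7$, and evaluating at $x=7$ shows $\omega(q^2,10^{10})\ge (\log 7 - 7/\varphi)/\sqrt{7}$. All the work is in the matching upper bound, which I would get by splitting $[2,10^{10}]$ at a moderate height $X_0$.

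On $[2,X_0]$ I would compute $\theta(x;q^2,l)$ exactly. Since it is a step function, constant between consecutive primes of the class while $x/\varphi$ grows linearly, on each inter-prime gap the normalized error $|\theta-x/\varphi|/\sqrt{x}$ is maximized at an endpoint: immediately after a jump (positive side) or just before the next jump (negative side). The post-jump values have the shape $(\theta(p;q^2,l)-p/\varphi)/\sqrt{p}$, equal to $(\log p - p/\varphi)/\sqrt{p}$ when $p<q^2$; since $t\mapsto(\log t)/\sqrt t$ peaks near $t=e^2$ and $p/\varphi$ is increasing, the maximum among $p=2,3,5,7,11,\dots$ is attained at $p=7$, and for $p\ge q^2$ the accumulated $\theta$ is well below the target. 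The pre-jump (negative) values are at most $\sqrt{x}/\varphi$ plus whatever excess is carried into the gap, and I would argue this stays comfortably under $(\log 7 - 7/\varphi)/\sqrt 7$ because gaps between primes in a progression to modulus $q^2$ below $X_0$ are short relative to $\varphi\sqrt{x}$ (the class $l\equiv 1$, with no prime until its first one, contributes only $\sqrt{x}/\varphi$). On $[X_0,10^{10}]$ I would instead invoke Lemma \ref{grh}: knowing that every zero of every Dirichlet $L$-function of modulus $q^2$ with imaginary part at most $1000$ lies on the critical line, I would run the truncated explicit formula for $\psi(x;q^2,l)$, and hence for $\theta(x;q^2,l)$, exactly along the lines of Ramar\'e--Rumely, obtaining a bound $|\theta(x;q^2,l)-x/\varphi|\le C_q\sqrt{x}$ on this range with $C_q<(\log 7-7/\varphi)/\sqrt 7$. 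Together with the lower bound this gives the stated equality.

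I expect the main obstacle to be this last step: squeezing the explicit-formula estimate on $[X_0,10^{10}]$ down below the rather small target $\approx 0.73$ forces $\theta(x;q^2,l)$ to agree with $x/\varphi$ to relative precision of order $10^{-3}$ near $x=10^{10}$, which genuinely uses the height-$1000$ GRH verification rather than any classical zero-free region; a secondary nuisance is the bookkeeping needed to certify that no negative excursion across a prime gap below $X_0$ exceeds the target.
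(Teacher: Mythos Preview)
Your treatment of the small moduli and your lower bound for $17\le q\le 97$ match the paper. The genuine gap is in your proposed upper bound on $[X_0,10^{10}]$: the truncated explicit formula with zeros verified only to height $1000$ is nowhere near strong enough to produce $C_q < (\log 7 - 7/\varphi)/\sqrt{7}\approx 0.73$. The preceding lemma in the paper extracts precisely this information from the height-$1000$ data, and for $q=17$ it yields $\epsilon(17^2,10^{10})=0.14271$; at $x=10^{10}$ that translates into a bound on $|\theta(x;289,l)-x/272|/\sqrt{x}$ of about $0.14271\cdot 10^{5}/272\approx 52$, and the figures for the other $q\le 97$ are comparable. You correctly flag this step as the main obstacle, but it is not merely an obstacle: the explicit-formula route misses the target by two orders of magnitude, so no choice of $X_0$ rescues it.

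The paper does not use the explicit formula or the GRH verification for this lemma at all. It simply reports a direct computation: sieve the primes up to $10^{10}$, accumulate $\theta(x;q^2,l)$ in every residue class, and track the running maximum of $|\theta(x;q^2,l)-x/\varphi(q^2)|/\sqrt{x}$; the maximum turns out to occur at $x=7$ in the class $l\equiv 7$. Your argument on $[2,X_0]$ is essentially this computation restricted to a prefix; the point is that one must carry it all the way to $10^{10}$, because no analytic shortcut of the kind you propose is available at this precision.
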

\begin{proof}
The results for $\{3^2,5^2,7^2,11^2,13^2\}$ are from Table 2 of \cite{ramarerumely} with a slight correction to the entry for $5^2$. A short computation shows that the maximum occurs for all of the other $q$ when $x=7$ and $a=7$.
\end{proof}

\begin{lemma} \label{apbounds}
Let $T=\sqrt{2.5\cdot 10^{14}}$. Then for $x\geq T$ and $q\leq 97$ an odd prime we have
$$\bigg| \theta(x;q^2,l) - \frac{x}{\varphi(q^2)} \bigg| < \epsilon\left(q^2,T\right) \frac{x}{\varphi(q^2)},$$
where the values of $\epsilon\left(q^2,T\right)$ are given in Table \ref{table1}.
\end{lemma}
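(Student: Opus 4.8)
The plan is to combine the two types of bounds already established: the multiplicative bound $\epsilon(q^2,10^{10})$ valid for $x > 10^{10}$ (Table \ref{tab:new_eq}), and the additive bound $\omega(q^2,10^{10})\sqrt{x}$ valid for $x \leq 10^{10}$ (the preceding lemma). Since $T = \sqrt{2.5\cdot 10^{14}} = \sqrt{2.5}\cdot 10^{7} < 10^{10}$, the range $x \geq T$ splits naturally into the two pieces $T \leq x \leq 10^{10}$ and $x > 10^{10}$.

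For the range $x > 10^{10}$ there is nothing to do beyond observing that $\epsilon(q^2,10^{10})$ is itself a valid choice of $\epsilon(q^2,T)$, since the defining inequality holds for all $x$ in the larger range $x > 10^{10} \supseteq$ that part of $x \geq T$. The work is therefore concentrated in the range $T \leq x \leq 10^{10}$. Here I would convert the $\omega$-bound into a multiplicative bound: from
$$\bigg| \theta(x;q^2,l) - \frac{x}{\varphi(q^2)} \bigg| < \omega(q^2,10^{10})\sqrt{x}$$
one gets
$$\bigg| \theta(x;q^2,l) - \frac{x}{\varphi(q^2)} \bigg| < \omega(q^2,10^{10})\frac{\varphi(q^2)}{\sqrt{x}}\cdot\frac{x}{\varphi(q^2)} \leq \omega(q^2,10^{10})\frac{\varphi(q^2)}{\sqrt{T}}\cdot\frac{x}{\varphi(q^2)},$$
using that $1/\sqrt{x} \leq 1/\sqrt{T}$ on this subrange. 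So on $T \leq x \leq 10^{10}$ the inequality holds with constant $\omega(q^2,10^{10})\varphi(q^2)/\sqrt{T}$ in place of $\epsilon$.

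Combining the two subranges, the lemma holds with
$$\epsilon(q^2,T) = \max\left\{\epsilon(q^2,10^{10}),\ \frac{\omega(q^2,10^{10})\varphi(q^2)}{\sqrt{T}}\right\},$$
and it remains only to check that the numbers recorded in Table \ref{table1} are (upper bounds for) these maxima. For the small moduli $q \in \{3,5,7,11,13\}$ one substitutes the explicit $\omega$-values and $\varphi(q^2) = q^2 - q$; for $17 \leq q \leq 97$ one uses the closed form $\omega(q^2,10^{10}) = (\log 7 - 7/\varphi(q^2))/\sqrt{7}$, so that $\omega(q^2,10^{10})\varphi(q^2)/\sqrt{T} = (\varphi(q^2)\log 7 - 7)/\sqrt{7T}$, a routine evaluation for each $q$. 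The only mild subtlety — and the main thing to be careful about — is keeping the direction of every inequality and the monotonicity of $1/\sqrt{x}$ straight, and confirming that for each $q$ the tabulated value indeed dominates \emph{both} candidates rather than just the first; I expect that for the larger $q$ the $\omega$-term wins while for the smallest $q$ the $\epsilon$-term may. This is a finite check over $q \leq 97$, so no genuine obstacle arises.
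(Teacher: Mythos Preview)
Your proposal is correct and follows exactly the paper's approach: split at $10^{10}$, convert the $\omega$-bound on $[T,10^{10}]$ to a multiplicative one via $\omega(q^2,10^{10})\varphi(q^2)/\sqrt{T}$, and take the maximum with $\epsilon(q^2,10^{10})$. Your only slip is the closing speculation---in fact the $\epsilon(q^2,10^{10})$ term dominates for every $q\leq 97$ (Table~\ref{table1} is just Table~\ref{tab:new_eq} rounded), but since you correctly frame this as a finite numerical check it does not affect the argument.
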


\begin{proof}
Using $\omega(q^2,10^{10})$ we have
\begin{equation*}
\bigg| \theta(T;q^2,l) - \frac{T}{\varphi(q^2)} \bigg| < \omega\left(q^2,10^{10}\right)\sqrt{T}
\end{equation*}
so for $x\in[T,10^{10}]$ we have
\begin{equation*}
\bigg| \theta(x;q^2,l) - \frac{x}{\varphi(q^2)} \bigg| < \frac{\omega\left(q^2,10^{10}\right)\varphi(q^2)}{\sqrt{T}}\frac{x}{\varphi(q^2)}
\end{equation*}
and so we can take 
\begin{equation*}
\epsilon\left(q^2,T\right)=\max\left(\epsilon\left(q^2,10^{10}\right),\frac{\omega\left(q^2,10^{10}\right)\varphi(q^2)}{\sqrt{T}}\right).
\end{equation*}
\end{proof}

\begin{table}[h!] 
\caption{Values for $\epsilon(q^2,T)$ for Lemma \ref{apbounds}.} 
\label{table1} 
\centering 
\begin{tabular}{c c c c c c c c } 
\\ \hline\hline 
$q$ & $\epsilon(q^2,T)$ & $q$ & $\epsilon(q^2,T)$ & $q$ & $\epsilon(q^2,T)$ & $q$ & $\epsilon(q^2,T)$  \\[0.5 ex] \hline
$3$ & $0.00323$ & $19$ & $0.17641$ & $43$ & $0.95757$ & $71$ & $2.82639$\\
$5$ & $0.01222$ & $23$ & $0.25779$ & $47$ & $1.15923$ & $73$ & $3.00162$\\
$7$ & $0.01702$ & $29$ & $0.41474$ & $53$ & $1.50179$ & $79$ & $3.56158$\\
$11$ & $0.03194$ & $31$ & $0.47695$ & $59$ & $1.89334$ & $83$ & $3.96363$\\
$13$ & $0.04250$ & $37$ & $0.69397$ & $61$ & $2.03488$ & $89$ & $4.61023$\\
$17$ & $0.14271$ & $41$ & $0.86446$ & $67$ & $2.49293$ & $97$ & $5.55434$\\
\hline\hline 
\end{tabular} 
\end{table}

Let $n \geq 2.5\cdot 10^{14}$ be such that $n \not\equiv 1 \text{ mod } 4$ and consider the case where $q$ is an odd prime such that $q \leq 97$. We want to bound from above the number of primes $p < \sqrt{n}$ satisfying
\begin{equation} \label{congruence}
n \equiv p^2 \text{ mod } q^2.
\end{equation}
Clearly, $p$ can belong to at most two arithmetic progressions moduluo $q^2$. Therefore, by Lemma \ref{apbounds},  we can estimate the weighted count of such primes as follows.
$$\sum_{\substack{p < \sqrt{n} \\ n \equiv p^2 \text{ mod } q^2}} \log p \leq  \theta(\sqrt{n}; q^2, l) + \theta(\sqrt{n}; q^2, l')  < \frac{2 (1+\epsilon(q^2,T))}{q (q-1)} \sqrt{n}$$
where $l$ and $l'$ are the possible congruence classes for $p$ and $\epsilon(q^2,T)$ is given in Table \ref{table1}. Summing this over all $24$ values of $q$ gives us the contribution

\begin{equation} \label{cont1}
\sum_{q \in \{3,\ldots,97\}} \sum_{\substack{p < \sqrt{n} \\ n \equiv p^2 \text{ mod } q^2}} \log p <0.568 \sqrt{n}.
\end{equation}

\subsection{Case 2}

We now consider the case where $97 < q \leq n^{c}$ and $c \in (0,1/4)$ is to be chosen later to achieve an optimal result. Montgomery and Vaughan's \cite{montgomeryvaughan} explicit version of the Brun-Titchmarsh Theorem gives us that
$$\pi(x;k,l) \leq \frac{2 x}{\varphi(k) \log(x/k)}$$
for all $x>q$. Trivially, one has that
$$\theta(\sqrt{n}; q^2,l) \leq \frac{\sqrt{n}}{q(q-1)} \frac{ \log n}{\log(\sqrt{n}/q^2)}.$$
As $q < n^{c}$, it follows that
\begin{equation} \label{cont1point5}
\sum_{97 < q \leq n^{c}} \sum_{\substack{p < \sqrt{n} \\ n \equiv p^2 \text{ mod } q^2}} \log p < \frac{\sqrt{n}}{\frac{1}{4}- c}  \sum_{97 < q \leq n^{c}} \frac{1}{q(q-1)}.
\end{equation}
We can bound the sum as follows:
\begin{eqnarray*}
\sum_{97 < q \leq n^{c}} \frac{1}{q(q-1)} & < & \sum_{97 < q < 1000001} \frac{1}{q (q-1)} + \sum_{ n \geq 1000001} \frac{1}{n(n-1)} \\
& = & \sum_{97 < q <1000001} \frac{1}{q (q-1)} + \frac{1}{1000000} <0.00183.
\end{eqnarray*}
Substituting this into (\ref{cont1point5}) gives us that
\begin{equation} \label{cont2}
\sum_{97 < q \leq n^{c}} \sum_{\substack{p < \sqrt{n} \\ n \equiv p^2 \text{ mod } q^2}} \log p < \frac{0.00183 \sqrt{n}}{\frac{1}{4}-c}.
\end{equation}

\subsection{Case 3}
 
Let $q$ be an odd prime such that $n^{c} < q < A \sqrt{n}$ and $A \in (0,1)$ is to be chosen later for optimisation. Since there are at most two possible residue classes modulo $q^2$ for $p$, the number of primes $p$ such that $n \equiv p^2 \text{ mod } q^2$ is trivially less than
$$2\bigg( \frac{\sqrt{n}}{q^2} + 1\bigg).$$
Clearly, including our logarithmic weights one has that
\begin{equation*}
\sum_{\substack{p < \sqrt{n} \\ n \equiv p^2 \text{ mod } q^2}} \log p <  \bigg( \frac{\sqrt{n}}{q^2} + 1\bigg) \log n
\end{equation*}
and so
\begin{eqnarray*}
\sum_{n^{c} < q < A \sqrt{n}} \sum_{\substack{p < \sqrt{n} \\ n \equiv p^2 \text{ mod } q^2}} \log p < \sqrt{n} \log n \sum_{m > n^{c}} \frac{1}{m^2} + \pi(A \sqrt{n}) \log(n)
\end{eqnarray*}
where $\pi(x)$ denotes the number of primes not exceeding $x$. The sum can be estimated in a straightforward way by 
$$\sum_{m > n^{c}} \frac{1}{m^2} < \frac{1}{n^{2c}} + \int_{n^{c}}^{\infty} \frac{1}{t^2} dt = \frac{1}{n^{2c}}+\frac{1}{n^c}$$
and Theorem 6.9 of Dusart \cite{dusart} gives us that
$$\pi(A \sqrt{n}) < \frac{A \sqrt{n}}{\log(A \sqrt{n})}\bigg(1+\frac{1.2762}{\log (A \sqrt{n})}\bigg).$$
Therefore, putting this all together we have
\begin{equation} \label{cont3}
\sum_{n^c<q<A \sqrt{n}} \sum_{\substack{p < \sqrt{n} \\ n \equiv p^2 \text{ mod } q^2}} \log p < \sqrt{n}(n^{-2c}+n^{-c})\log n + \frac{A \sqrt{n} \log n}{\log(A \sqrt{n})}\bigg(1+\frac{1.2762}{\log (A \sqrt{n})}\bigg).
\end{equation}

\subsection{Case 4}

Finally, we consider the range $A \sqrt{n} \leq q < \sqrt{n}$. If $n-p^2$ is divisible by $q^2$, then
\begin{equation} \label{blah}
n = p^2 + B q^2
\end{equation}
for some positive integer $B < A^{-2}$. We will need some preliminary results here. First, it is known by the theory of quadratic forms (see Davenport \cite[Ch. 6]{davenport}) that the equation
$$ax^2+by^2=n,$$
where $a, b$ and $n$ are given positive integers, has at most $w 2^{\omega(n)}$ proper solutions, that is, solutions with $\gcd(x,y)=1$. Note that $w$ denotes the number of automorphs of the above form and $\omega(n)$ denotes the number of different prime factors of $n$. The number of automorphs is directly related to the discriminant of the form; specifically, $w=4$ for the case $B=1$ and $w=2$ for $B>1$. Moreover, we are only interested in the case where $x$ and $y$ are both positive, and so it follows that equation (\ref{blah}) has at most $w 2^{\omega(n)-2}$ proper solutions. Finally, noting that there will be at most 1 improper solution to (\ref{blah}), namely $p=q$, we can bound the overall number of solutions to (\ref{blah}) by $w 2^{\omega(n)-2}+1$.

Furthermore, Theorem 11 of Robin \cite{robin}  gives us the explicit bound
$$\omega(n) \leq 1.3841 \frac{\log n}{\log \log n} $$
for all $n \geq 3$. Thus, for fixed $n$ and $B$, it is easy to bound explicitly from above the number of solutions to (\ref{blah}). It remains to sum this bound over all valid values of $B$. However, we should note that given an integer $n$, there are not too many good choices of $B$, and this will allow us to make a further saving.

This comes from the observation that every prime $p >3$ satisfies $p^2 \equiv 1 \text{ mod } 24$. For with $p>3$ and $q>3$, Equation (\ref{blah}) becomes
$$B \equiv n - 1 \text{ mod } 24,$$
and this confines $B$ to the integers in a single residue class modulo 24. 

Formally and explicitly, we argue as follows. Consider first the case where $B$ is an integer in the range
$$\frac{n-9}{A^2 n} \leq B < \frac{1}{A^2}.$$
The leftmost inequality above keeps $p \leq 3$. Here, there are clearly at most 
$$\frac{9}{A^2 n} + 1$$
integer values for $B$. We now consider the case where $p>3$, and it follows that $B \equiv n - 1 \text{ mod } 24$. Clearly, then, there are at most 
$$\frac{1}{24 A^2} + 1$$ values for $B$ in this range. Therefore, in total, there are at most 
$$2 + \frac{1}{24 A^2} + \frac{9}{A^2 n}$$
values of $B$ for which we need to sum the solution counts to Equation (\ref{blah}). Also, we must also consider that $w=4$ for $B=1$. Therefore, we have that the number of solutions to Equation (\ref{blah}) summed over $B$ is bounded above by
$$2^{\omega(n)-1}\bigg(3 + \frac{1}{24 A^2} + \frac{9}{A^2 n}\bigg).$$
Therefore, the number of primes $p$ (including weights) which satisfy (\ref{blah}) is at most
\begin{equation} \label{cont4}
\sum_{A \sqrt{n} \leq q < \sqrt{n}} \sum_{\substack{p < \sqrt{n} \\ n \equiv p^2 \text{ mod } q^2}} \log p <  2^{1.3841 \log n / \log \log n}\bigg(\frac{3}{2} + \frac{1}{48 A^2} + \frac{9}{2A^2 n}\bigg) \log n.
\end{equation}

\subsection{Collecting terms}

Now, collecting together (\ref{cont1}), (\ref{cont2}), (\ref{cont3}) and (\ref{cont4}), we have that the weighted count over all the so-called mischevious primes can be bounded thus
\begin{eqnarray*}
\sum_{q < \sqrt{n}} \sum_{\substack{p < \sqrt{n} \\ n \equiv p^2 \text{ mod } q^2}} \log p & < & \bigg(0.568+\frac{0.00183}{\frac{1}{4}-c}+ (n^{-2c}+n^{-c})\log n \bigg) \sqrt{n} \\
& &+ \;\frac{A \sqrt{n} \log n}{\log(A \sqrt{n})}\bigg(1+\frac{1.2762}{\log (A \sqrt{n})}\bigg) \\
& &+ \;2^{1.3841 \log n / \log \log n}\bigg(\frac{3}{2} + \frac{1}{48 A^2} + \frac{9}{2A^2 n}\bigg) \log n.
\end{eqnarray*}
As expected, however, the weighted count over all primes exceeds this for large enough $n$ and good choices of $c$ and $A$. Dusart \cite{dusart} gives us that 
$$\theta(x) \geq x - 0.2 \frac{x}{\log^2 x}$$
for all $x \geq 3 594 641$, and thus it follows that
$$\theta(\sqrt{n}) \geq \sqrt{n} - 0.8 \frac{\sqrt{n}}{\log^2 n}$$
for all $n \geq 10^{14}$. Therefore, if we denote by $R(n)$ the (weighted) count of primes $p$ such that $n-p^2$ is square-free, it follows that
\begin{eqnarray*}
R(n) & > & \bigg(1-0.568-\frac{0.00183}{\frac{1}{4}-c}-\frac{0.8}{\log^2 n} -(n^{-2c}+n^{-c})\log n\bigg) \sqrt{n}\\
& - &\frac{A \sqrt{n} \log n}{\log(A \sqrt{n})}\bigg(1+\frac{1.2762}{\log (A \sqrt{n})}\bigg) \\
& - & 2^{1.3841 \log n / \log \log n}\bigg(\frac{3}{2} + \frac{1}{48 A^2} + \frac{9}{2A^2 n}\bigg) \log n.
\end{eqnarray*}
It is now straightforward to check that choosing $c=0.209$ and $A=0.0685$ gives $R(n)>0$ for all $n \geq 2.5 \times 10^{14}$.

\section{Numerical Verification for ``Small'' $n$}

We now describe a computation undertaken to confirm that all $n\not\equiv 1 \mod 4$, $10\leq n \leq 4\,000\,023\,301\,851\,135$ can be written as the sum of a prime squared and a square-free number.\footnote{This is a factor of $16$ further than we actually needed to check, but we did not expect our analytic approach to fare as well as it did.} We will first describe the algorithm used, and then say a few words about its implementation.

\subsection{The Algorithm}

We aim to test $3\cdot 10^{15}$ different $n$. We quickly conclude that we cannot afford to individually test candidate $n-p^2$ to see if they are square-free. There is an analytic algorithm \cite{Booker2015} that is conjectured to be able to test a number of size $n$ in time $\mathcal{O}(\exp([\log n]^{2/3+o(1)})$ but this is contingent on the Generalised Riemann Hypothesis. We would be left needing to factor each $n-p^2$, which would be prohibitively expensive.

We proceeed instead by chosing a largest prime $P$ and a sieve width $W$. To check all the integers in $[N,N+W)$ we first sieve all the integers in $[N-P^2,N+W-4)$ by crossing out any that are divisible by a prime square $p^2$ with $p<\sqrt{(N+W-5)/2}$. Now for each $n\in[N,N+W)$, $n\not\equiv 1 \mod 4$ we lookup in our sieve to see if $n-4$ is square-free\footnote{Unless $n\equiv 0 \mod 4$.}. If not, we try $n-9$ then $n-25$ and so on until $n-p^2$ is square-free. If it fails all these tests up to and including $n-P^2$, we output $n$ for later checking.

\subsection{The Implementation}

Numbers of this size fit comfortably in the $64$ bit native word size of modern CPUs and we implemented the algorithm in C++. We use a character array for the sieve\footnote{We considered using each byte to represent $8$ or more $n$ but the cost of the necessary bit twiddling proved too heavy.}, and chose a sieve width $W=2^{31}$ as this allows us to run $16$ such sieves in parallel in the memory available. We set the prime limit $P=43$ as this was found to reduce the number of failures to a manageable level (see below). To generate the primes used to sieve the character array we used Kim Walisch's PrimeSieve \cite{Walisch2012}.

We were able to run $16$ threads on a node of the University of Bristol's Bluecrystal Phase III cluster \cite{ACRC2014} and in total we required $5,400$ core hours of CPU time to check all $n\in[2048,4\,000\,023\,301\,851\,135]$. $4\,915$ $n$ were rejected as none of $n-p^2$ with $p\leq 43$ were square-free. We checked these $4\,915$ cases in seconds using PARI \cite{Batut2000} and found that $p=47$ eliminated $4\,290$ of them, $53$ does for a further $538$, $59$ for $14$ more, $61$ for $61$ (!), $67$ doesn't help (!), $71$ kills off $11$ more and the last one standing, $n=1\,623\,364\,493\,706\,484$ falls away with $p=73$. Finally, we use PARI again to check $n\in[10,2047]$ with $n\not\equiv 1 \mod 4$ and we are done.

It is interesting to consider the efficiency of the main part of this algorithm. The CPUs on the compute nodes of Phase III are $2.6$GHz Intel\textsuperscript{\textregistered} Xeon\textsuperscript{\textregistered} processors and we checked $3\cdot 10^{15}$ individual $n$ in $5\,400$ hours. This averages less than $17$ clock ticks per $n$ which suggests that the implementation must have made good use of cache.

\clearpage

\bibliographystyle{plain}

\bibliography{biblio}

\end{document}